\documentclass[12pt,lettersize]{article}

\pdfpagewidth 8.5in
\pdfpageheight 11in

\usepackage{amsmath,amssymb,amsthm}%,hyperref,backref,nameref,showkeys}

\textwidth = 6.5 in
\textheight = 9 in
\oddsidemargin = 0 in
\evensidemargin = 0 in
\topmargin = 0.0 in
\headheight = 0.0 in
\headsep = 0.0 in

\setlength\parindent{0pt}
\setlength\parskip{10pt}

\DeclareMathOperator{\spec}{spec}

\DeclareMathOperator{\Real}{Re}
\DeclareMathOperator{\Imag}{Im}
\DeclareMathOperator{\dist}{dist}

\renewcommand{\epsilon}{\varepsilon}

\newcommand{\union}{\bigcup}

\newcommand{\tw}{\widetilde}% as in twiddle

 % set 'less'

\newcommand{\st}{\,|\,}
\newcommand{\maps}{\colon}

\newcommand{\poiss}[1]{\{#1\}}

\theoremstyle{plain}% default

\newtheorem{theorem}{Theorem}

\newtheorem{prop}[theorem]{Proposition}

\newtheorem{corollary}[theorem]{Corollary}
\newtheorem{lemma}[theorem]{Lemma}
\numberwithin{theorem}{section}
\numberwithin{equation}{section}

\theoremstyle{definition}

\theoremstyle{remark}

\newtheorem{remark}[theorem]{Remark}

\DeclareMathOperator{\aad}{ad}

\title{Diophantine Tori and Nonselfadjoint Inverse Spectral Problems}
\author{Michael A Hall\footnote{Partially supported by NSF grant EMSW21-RTG.}\\
Department of Mathematics\\
University of California\\
Los Angeles\\
CA 90095-1555, USA \\
\texttt{michaelhall@math.ucla.edu}}
\date{}

\begin{document}
\maketitle
 
\thanks

\begin{abstract}
We study a semiclassical inverse spectral problem based on a spectral asymptotics result of \cite{HSV}, which applies to small non-selfadjoint perturbations of selfadjoint $h$-pseudodifferential operators in dimension 2. The eigenvalues in a suitable complex window have an expansion in terms of a quantum Birkhoff normal form for the operator near several Lagrangian tori which are invariant under the classical dynamics and satisfy a Diophantine condition. In this work we prove that the normal form near a single Diophantine torus is uniquely determined by the associated eigenvalues. We also discuss the normalization procedure and symmetries of the quantum Birkhoff normal form near a Diophantine torus.
\end{abstract}

\section{Introduction} \label{intro}

Let $M$ denote either $\mathbf{R}^2$ or a compact, real analytic manifold of dimension 2, and let $\tw M$ denote a complexification of $M$, which is $\mathbf{C}^2$ in the Euclidean case, and a Grauert tube of $M$ in the compact analytic 2-manifold case. 

We study operators of the form $P_\epsilon = P + i\epsilon Q$, where $P,Q$ are analytic $h$-pseudodifferential operators on $M$ with principal symbols $p,q$, respectively, and $P$ is selfadjoint. The principal symbol of $P_\epsilon$ is then $p + i\epsilon q$, where $p$ is real. We will also make a non-degeneracy assumption on $\Real q$, but we do not require $Q$ to be selfadjoint. 

Consider a Lagrangian torus $\Lambda$, contained in an energy surface $p^{-1}(0) \cap T^*M$, which is invariant with respect to the Hamilton flow of $p$ and satisfies a Diophantine condition (see section \ref{dyn}). For simplicity, we will assume:
\begin{align}\label{integ}
\text{The Hamilton flow of $p$ is completely integrable in a neighborhood of $p^{-1}(0)$.}
\end{align}
This implies that in a neighborhood of $\Lambda$ the energy surface is foliated by invariant Lagrangian tori. 

In action-angle coordinates $(x,\xi)$ such that $\Lambda$ is the set $\{\xi = 0\} \subseteq T^* \mathbf{T}^2_x$, where $\mathbf{T}^2 = \mathbf{R}^2/2\pi \mathbf{Z}^2$, let 
\begin{align}\label{avg}
\langle q \rangle (\xi) = \frac{1}{(2\pi)^2} \int_{\mathbf{T}^2} q(x,\xi)\, dx
\end{align}
denote the spatial average of $\langle q \rangle$. Here we take $x$ to be a multi-valued function whose gradient is single-valued. We assume that $dp$ and $d\Real \langle q \rangle$ are linearly independent along $\Lambda$. Then, as is explained in section \ref{QBNF}, we may make a sequence of changes of variables which transforms the full symbol of $P_\epsilon$ into a Birkhoff normal form, which in this context means an asymptotic expansion in $(\xi,\epsilon,h)$ that is independent of $x$ to high order. Formally we may carry out this procedure on the level of operators by conjugating by a sequence of appropriately defined Fourier integral operators, obtaining what we call a quantum Birkhoff normal form for $P_\epsilon$. 

Under some further technical assumptions which we will explain later on, one of the main theorems of \cite{HSV} establishes, for any $\delta > 0$, asymptotics for the eigenvalues of $P_\epsilon$ in an $h$-dependent window in the complex plane
\begin{align}\label{box}
\left\{z \in \mathbf{C} \, \left| \, |\Real z| < \frac{h^\delta}{C},\ |\Imag z - \epsilon \Real F| < \frac{\epsilon h^\delta}{C}\right.\right\}.
\end{align}
Here $F = \langle q \rangle|_\Lambda = \langle q \rangle(0)$, and we assume that this average is not shared by any other invariant torus. The expansions are given in terms of a Bohr-Sommerfeld type condition and the quantum Birkhoff normal form of $P_\epsilon$ near $\Lambda$. 

Our goal in this work is to address the semiclassical inverse problem of determining the quantum Birkhoff normal form of $P_\epsilon$ from the eigenvalues in (\ref{box}), assuming the unperturbed operator $P$ is known. 

Inverse spectral problems have been studied for many years, as surveyed for example by Zelditch \cite{Zelditch}. Recently, \textit{semiclassical} inverse spectral problems have been investigated by several authors, such as Colin de Verdi{\`e}re \cite{CdVII}\cite{CdV}, Colin de Verdi{\`e}re-Guillemin \cite{CdVI}\cite{ColindeVerdiereGuillemin}, Guillemin-Paul-Uribe \cite{GuilleminPaulUribe}, Guillemin-Paul \cite{GuilleminPaul}, Guillemin-Uribe \cite{GuilleminUribe}, Hezari \cite{Hezari}, Iantchenko-Sj\"ostrand-Zworski \cite{IantchenkoSjostrandZworski}, V{\~u} Ng{\d{o}}c \cite{VuNgoc}. Often in inverse spectral problems one studies the wave trace, in the spirit of Guillemin \cite{Guillemin}. The non-selfadjoint case in dimension 2 is special because the eigenvalues may have an explicit description in terms of the Birkhoff normal form and Bohr-Sommerfeld type rules, an idea first explored in Melin-Sj\"ostrand \cite{MeSj}. In such a situation it seems most natural to recover the normal form directly from eigenvalue asymptotics. Our approach is taken very much in the spirit of Colin de Verdi{\`e}re \cite{CdV}. 

According to \cite{HSV}, the eigenvalues in the window (\ref{box}) form a distorted lattice, with horizontal spacing $\sim h$ and vertical spacing $\sim \epsilon h$. The window is of size $h^\delta$ by $\epsilon h^\delta$, for some $0 < \delta \ll 1$, which means that the asymptotic expansions are valid for a comparatively large number of eigenvalues (on the order of $h^{2(\delta-1)}$) as $h \to 0$. 

In addition, for the semiclassical inverse problem, we assume we know the eigenvalues for \textit{each} sufficiently small value of the semiclassical parameter (or possibly for a sequence of values of $h$ tending to 0). This provides a rich data set, from which we will recover information about the Birkhoff normal form using elementary order of magnitude arguments. In the perturbative, non-selfadjoint setting there is also the parameter $\epsilon$ to consider, since the results of \cite{HSV} apply to all values of $\epsilon$ such that $h^K \leq \epsilon \leq h^\delta$. We will exploit this flexibility to assume that we can choose $\epsilon$ so as to rule out any sort of degenerate relationship between $\epsilon$ and $h$. For simplicity, we shall assume more strongly that we know the eigenvalues for all values of $\epsilon$ in such a range. See also the remarks at the end of Section {\ref{Main}.

Our main result, stated informally, is the following (see Theorem \ref{thm1} for the precise statement)
\begin{theorem}
The eigenvalues of $P_\epsilon$ in (\ref{box}) determine the quantum Birkhoff normal form of $P_\epsilon$ near $\Lambda$. 
\end{theorem}

The plan of the paper is as follows. In Section \ref{assumptions}, we recall the setting and technical assumptions of \cite{HSV} needed to apply one of the main results of that paper. In Section \ref{QBNF}, we review the normal form construction in the present context, using some notation and methods due to S. V\~u Ng\d oc, and we also discuss symmetries and uniqueness of the normal form. In Section \ref{spectralasymptotics}, we recall a spectral asymptotics result of \cite{HSV} in a precise form, which is the basis of the inverse result. Finally, in Section \ref{Main}, we prove our main theorem.

{\bf Acknowledgements.} I would like to thank Michael Hitrik for suggesting the problem and for his patient guidance with this paper. Thanks also to San V\~ u Ng\d oc for some helpful comments, as well as allowing us to include some elements of an unpublished exposition of the normal form construction in Section \ref{BNF}. For a similar presentation in the case of the Birkhoff normal form near a nondegenerate minimum, see \cite{CN}. Thanks 
 as well the anonymous referees for suggesting several improvements to the exposition.

\section{Assumptions}\label{assumptions}
 
We will state our assumptions along the lines of section 7 of \cite{HSV}, in particular restricting our attention to the completely integrable case (\ref{integ}) rather than the most general case treated in that work. 

%, and making assumptions about our operators which ensure that some standard results about FBI transforms apply, though we do not need FBI transforms here. 

%In fact, we can imagine that our main result holds (with exactly the same proof) more generally. In principle, one only needs that the asymptotic expansions given in Theorem 1.1 of \cite{HSV} are valid, as well as the Diophantine assumption, which is explained below in section \ref{dyn}.

\subsection{Analyticity and general assumptions}

Let us assume that $P_\epsilon = P+i\epsilon Q$, with $\epsilon\in \mbox{neigh}(0,\mathbf{R})$, satisfies the same general assumptions as operators studied in \cite{HSV}, which we recall here for convenience.

When $M = \mathbf{R}^2$, assume that $P_\epsilon = P_\epsilon(x,hD_x;h) = P(x,hD_x;h) + i\epsilon Q(x,hD_x;h)$ is the Weyl quantization of a symbol which we also denote $P_\epsilon(x,\xi,\epsilon;h) = P(x,\xi;h) + i\epsilon Q(x,\xi;h)$. 
Assume that $P_\epsilon$ is a holomorphic function of $(x,\xi)$ in a complex tubular neighborhood of $\mathbf{R}^4\subseteq \mathbf{C}^4$. Assume that 
\begin{align}\label{orderfunction}
|P_\epsilon(x,\xi;h)| \leq \mathcal{O}(1)g(\Real(x,\xi))
\end{align}
in this neighborhood, where $g \geq 1$ is an order function in the sense that 
\begin{align*}{}
g(X) \leq C \left\langle X-Y \right\rangle^M g(Y),\quad C > 0,\ M > 0,\ X,Y\in \mathbf{R}^4. 
\end{align*}
Assume that $P$ and $Q$ have asymptotic expansions
\begin{align}
P(x,\xi;h) \sim \sum_{j=0}^\infty h^j p_j(x,\xi), \quad Q(x,\xi;h)\sim \sum_{j=0}^\infty h^j q_j(x,\xi)
\end{align}
valid in the space of holomorphic symbols satisfying the bound (\ref{orderfunction}). Let us also assume that the principal symbol $p = p_0$ satisfies an ellipticity condition at infinity,
\begin{align*}{}
|p(x,\xi)| \geq \frac{1}{C}g(\Real(x,\xi)),\quad |(x,\xi)| \geq C. 
\end{align*}

When $M$ is a compact, real analytic 2-manifold, assume that in any choice of local coordinates $P_\epsilon = P+i\epsilon Q$ is a differential operator of order $m$ with analytic coefficients, which themselves have asymptotic expansions in integer powers of $h$. Assume also that the principal symbol $p$ of $P$ satisfies an ellipticity condition near infinity,
\begin{align*}\label{}
|p(x,\xi)| \geq \frac{1}{C} \langle \xi \rangle^m, \quad (x,\xi)\in T^*M,\ |\xi| \geq C,
\end{align*} 
where we implicitly assume that $M$ has been equipped with an analytic Riemannian metric, so that the quantity $\langle \xi \rangle = \sqrt{1 + |\xi|^2}$ makes sense. Assume also that the underlying Hilbert space is $L^2(M,\mu(dx))$, where $\mu$ is the Riemannian volume form on $M$.

In both cases, we assume that $P$ is formally selfadjoint on $L^2$, which implies that $p$ is real. 

The above assumptions imply also that $P_\epsilon$ has a natural closed, densely defined realization on $L^2$, which has discrete spectrum in a fixed neighborhood of $0\in \mathbf{C}$ for $h,\epsilon$ small enough. Also, we have that $\spec(P_\epsilon)\cap \mbox{neigh}(0, \mathbf{C}) \subseteq \{z \st \Imag z = \mathcal{O}(\epsilon)\}$. 

\subsection{Assumptions on the classical dynamics}\label{dyn}

%\textbf{Non-critical energy surface}

Assume the energy surface $p^{-1}(0) \cap T^* M$ is non-critical, i.e. $dp \neq 0$ along this set. For simplicity, we assume $p^{-1}(0) \cap T^* M$ is connected. Let 
\begin{align*}{}
H_p = \sum_{j = 1,2} \frac{\partial p}{\partial \xi_j} \frac{\partial}{\partial x_j} - \frac{\partial p}{\partial x_j}\frac{\partial}{\partial \xi_j}
\end{align*}
denote the Hamilton vector field of $p$ (in any choice of canonical coordinates). 

%\textbf{Complete integrability assumption}

By the complete integrability assumption (\ref{integ}), there exists an analytic, real-valued function $\tw p$ such that $H_p \tw p = \poiss{p,\tw p} = 0$ with $d\tw p$ and $dp$ linearly independent almost everywhere. Here, $\poiss{\cdot,\cdot}$ denotes the Poisson bracket. Then the energy surface $p^{-1}(0)\cap T^*M$ decomposes as a disjoint union of compact, connected $H_p$-invariant sets, which we assume has the structure of a graph, in which edges correspond to families of regular invariant Lagrangian tori and vertices correspond to singular invariant sets.

Near an invariant torus $\Lambda$ we have real analytic action-angle coordinates $(x,\xi)$ such that $\Lambda = \{\xi = 0\}$ and $H_p|_\Lambda = a\cdot \partial_x$ for some frequency vector $a\in \mathbf{R}^2$. We refer to $\Lambda$ as a rational, irrational, or Diophantine torus if the vector $a$ has the corresponding property. Below, we will consider a Diophantine torus, i.e. one such that the frequencies $a$ satisfy
\begin{align}\label{diof}
|a\cdot k| \geq \frac{1}{C_0|k|^{N_0}}, \quad 0 \neq k \in \mathbf{Z}^n. 
\end{align}
for some $C_0>0,N_0>0$. (Here, $n = 2$.)

%\subsubsection{Averaging and spectral localization}

In action-angle coordinates near any such $\Lambda$, the principal symbol $p$ takes the form
\begin{align}\label{expansion}
p = p(\xi) = a\cdot \xi + \mathcal{O}(\xi^2).
\end{align}
In particular, it is independent of $x$, which means that it is in Birkhoff normal form. 

Let $q$ be the principal symbol of $Q(x,hD_x;h)$, and let $\left\langle q \right\rangle|_{\Lambda}$ denote the average as in (\ref{avg}) of $q$ with respect to the natural smooth measure on $\Lambda$. We assume that the analytic function $\Lambda\mapsto \Real \left\langle q \right\rangle|_\Lambda$ is not identically constant on any of the aforementioned ``edges'', consisting of families of invariant tori.

%\textbf{Time T average}

When $T > 0$, let $\left\langle q \right\rangle_T$ denote the symmetric time $T$ average of $q$ along the $H_p$-flow:
\begin{align*}{}
\left\langle q \right\rangle_T(x,\xi) = \frac{1}{T} \int_{-T/2}^{T/2} q\circ \exp(sH_p)(x,\xi)\, ds. 
\end{align*}
%
%
%\textbf{Intervals $Q_\infty$}
%
For each invariant torus $\Lambda$, define the interval
\begin{align}\label{Qinf}
Q_\infty(\Lambda) = \left[\lim_{T\to \infty} \inf_{\Lambda} \Real \langle q \rangle_T, \lim_{T\to \infty} \sup_{\Lambda}\Real \langle q \rangle_T \right].
\end{align}
As in \cite{SjDWE}, we have that $\spec P_\epsilon \cap \{|\Real z| \leq \delta\}$ is contained in a band 
\begin{align*}{}
\frac{\Imag z}{\epsilon} \in \left[ \inf \union_\Lambda Q_\infty(\Lambda) - o(1), \sup \union_\Lambda Q_\infty(\Lambda) + o(1)\right]
\end{align*}
as $\epsilon,h,\delta \to 0$. 

From now on, fix a single Diophantine invariant Lagrangian torus $\Lambda$, set $F = \left\langle q \right\rangle|_{\Lambda}$, and assume that 
\begin{align}\label{nondeg}
\text{ $dp$ and $d_\Lambda\Real\langle q \rangle|_\Lambda$ are linearly independent.}
\end{align}

With all the assumptions above, and in particular assuming complete integrability, the last global assumption needed is that
\begin{align}\label{global}
\Real F \not\in Q_\infty(\Lambda'), \quad \Lambda' \neq \Lambda. 
\end{align}
Without assuming complete integrability, a different assumption is needed (see \cite{HSV}, (1.24)).

The eigenvalue asymptotics result of \cite{HSV} is valid in the $(h,\epsilon)$-dependent rectangle (\ref{box}) for sufficiently small $h$ and assuming $h^K \leq \epsilon \leq h^\delta$, where $K$ is a fixed integer, which can be chosen arbitrarily large, and $\delta > 0$ is also fixed, and can be chosen arbitrarily small. 

\begin{remark}
Note that the global condition (\ref{global}) implies the value $F$ is unique to $\Lambda$. The results of \cite{HSV} apply also to a finite collection of Diophantine tori sharing the value $F$, in which case the set of eigenvalues in (\ref{box}) is simply the union of the contributions from each individual torus, modulo $\mathcal{O}(h^\infty)$. However, we will not consider the problem of separating the contributions of several tori. 
\end{remark}

% QBNF construction

\section{Quantum Birkhoff normal form}\label{QBNF}

In this section we present the quantum Birkhoff normal form construction near a Diophantine torus for a perturbed symbol, and discuss issues of uniqueness for the normal form and normalizing change of variables. 
%For a similar exposition in the case of a local minimum, one could read \cite{CN}. 
Though we only need to consider dimension 2, it is natural to carry out the discussion in a general dimension $n$, as no changes are needed. We will work on $T^*\mathbf{T}^n$, assuming we are in a microlocal model where the Diophantine torus in question corresponds to the 0 section $\{\xi = 0\}$. 

\subsection{Normal form construction}\label{BNFconstr}

Let us identify symbols on $T^*\mathbf{T}^n$ with their formal Weyl quantizations. The Moyal formula% (see e.g. \cite{Martinez})
\begin{align*}
P\#^w Q(x,\xi,\epsilon;h) \sim \sum_{|\alpha|,|\beta| = 0 }^\infty \frac{h^{|\alpha| + |\beta|}(-1)^{|\alpha|}}{(2i)^{|\alpha|+|\beta|}\alpha!\beta!} (\partial_x^\alpha \partial_\xi^\beta P(x,\xi,\epsilon) ( \partial_\xi^\alpha\partial_x^\beta Q(x,\xi,\epsilon))
\end{align*}
defines a product operation on symbols which corresponds to composition of the corresponding operators. We denote by $[\cdot,\cdot]$ the associated bracket operation, which on the level of Weyl quantizations is simply the commutator bracket.

The normal form construction may be summarized as follows: We make a sequence of analytic, symplectic changes of variables, which transform $P_\epsilon$ to a symbol which is independent of $x$ to higher and higher order in $(\xi,\epsilon,h)$. On the level of operators this is formally equivalent to conjugating by a sequence of Fourier integral operators. The resulting sequence of symbols is convergent in the space of formal power series in $(\xi,\epsilon,h)$. The quantum Birkhoff normal form (QBNF) of $P_\epsilon$ near the Diophantine torus $\Lambda$ is {an asymptotic expansion which is} the formal limit of this procedure, while if we truncate the procedure after finitely many steps we get a well-defined analytic change of variables. 

Later, we will often write the QBNF as a formal expansion
\begin{align*}{}
  P^{(\infty)}(\xi,\epsilon,h) \sim \sum_{j,m,n} P^{(\infty)}_{jmn}(\xi)\epsilon^m h^n,
\end{align*}
where the sum is over integers $j\geq 0$, $m\geq 0$, $n\geq 0$, and $P_{jmn}$ is a homogeneous polynomial of degree $j$ in $\xi$, and $P^{(\infty)}$ denotes the entire formal expansion. % (here our notation conflicts with \cite{HSV}). 

For the moment, however, it is convenient to use slightly different notation. Consider a grading in $(\xi,\epsilon,h)$ which counts the power in $\xi$ plus {\it twice} the power in $(\epsilon,h)$. Let $\mathcal{O}(N)$ denote the associated order classes. Here we do not attach any special significance to the number two, but we note the convenience of this sort of grading: because the Moyal formula has an asymptotic expansion in powers of $(\frac{h}{i} \frac{\partial}{\partial \xi}, \frac{\partial}{\partial x})$, the higher weight of $h$ ensures that each time we lose an order in $\xi$ we gain one in $h$. This implies that the main contribution in the bracket $\frac{i}{h}[\cdot,\cdot]$ comes from the Poisson bracket of the two symbols. 

When $K_j = \mathcal{O}(j)$ and $K_\ell = \mathcal{O}(\ell)$, their Poisson bracket satisfies
\begin{align*}{}
\poiss{K_j,K_\ell} = \sum_{k} \frac{\partial K_j}{\partial \xi_k} \frac{\partial K_\ell}{\partial x_k}  - \frac{\partial K_j}{\partial x_k}\frac{\partial K_\ell}{\partial \xi_k} = \mathcal{O}(j + \ell - 1),
\end{align*}
By what we have said above, we see that $\frac{i}{h}[K_j,K_\ell] = \poiss{K_j,K_\ell} + \mathcal{O}(j + \ell)$. 

%\textbf{Normalization procedure}

\begin{prop}\label{BNF}
Suppose  that $P = P_1 + \mathcal{O}(2)$ is analytic in $x$ and $\xi$, where $P_1 = a\cdot \xi$, and that $a$ satisfies the Diophantine condition (\ref{diof}). Then for all $N \geq 1$ there exist functions
\begin{align*}{}
G^{(1)} = 0,\quad G^{(N)} = G_2 + \cdots + G_N \ (N \geq 2), \quad P^{(N)} = P_1 + P_2 + \cdots + P_N, \quad  R_{N+1},
\end{align*}
which are analytic in $x$, with $G_j$, $P_j$ and $R_j$ homogeneous of degree $j$ with respect to the grading described above (thus polynomials in $\xi$), such that
\begin{align}\label{nth}
\textstyle  \exp\left( \frac{i}{h}\aad_{G^{(N)}}\right)P = P^{(N)} + R_{N+1} + \mathcal{O}(N+2)
\end{align}
with each $P_j$ independent of $x$. 
\end{prop}

Here, we write formally $\aad_G P = [G,P]$ and $\exp(\frac{i}{h}\aad_G) P = \exp(\frac{i}{h}G)P\exp(-\frac{i}{h}G)$, but note that these do not represent concretely defined operators. Notice also that $G^{(N)} = \mathcal{O}(2)$ for all $N \geq 1$. 

\begin{remark}
Note that although it is not ruled out by the notation, it will follow from the proof that no half-powers of $h$ or $\epsilon$ appear in the normal form. 
\end{remark}

\begin{proof}
We proceed by induction on the order $N$. By assumption the claim holds for $N = 1$, with $G_1 = 0$, and $R_2$ representing the homogeneous terms of degree 2. 
Inductively if (\ref{nth}) holds, then setting $G^{(N+1)} = G^{(N)} + G_{N+1}$, for some function $G_{N+1}$, homogeneous of degree $N+1$, to be determined, we claim that the only new term modulo $\mathcal{O}(N+2)$ is given by $\poiss{G_{N+1},P_1}$. Indeed, by the Campbell-Hausdorff formula
\begin{align*}{}
\textstyle \exp\left( \frac{i}{h}\aad_{G^{(N+1)}}\right)P
&=\textstyle   \exp\left( \frac{i}{h}\aad_{G_{N+1}} + \frac{i}{h} \aad_{G^{(N)}}\right) P \\
&=\textstyle  \exp\left( \frac{i}{h}\aad_S\right)\circ\exp\left( \frac{i}{h}\aad_{G_{N+1}}\right) \circ \exp\left( \frac{i}{h}\aad_{G^{(N)}}\right) P
\end{align*}
where $S = \mathcal{O}(\frac{1}{2} [G_{N+1},G^{(N)}]) = \mathcal{O}((N+1)+2-1) = \mathcal{O}(N+2)$. Here we have used that $G^{(N)}= \mathcal{O}(2)$.  Meanwhile 
\begin{align*}{}
\textstyle \exp\left( \frac{i}{h}\aad_{G_{N+1}}\right) \circ \exp\left( \frac{i}{h}\aad_{G^{(N)}}\right) P
&= \textstyle  \exp\left( \frac{i}{h}\aad_{G_{N+1}}\right)(P^{(N)} + R_{N+1}) = \mathcal{O}(1).
\end{align*}
Because of the Campbell-Hausdorff formula and the order of $S$, applying $\exp(\frac{i}{h}\aad_S)$ only affects the terms of order $\mathcal{O}(1+(N+2)-1) =  \mathcal{O}(N+2)$. Therefore we have
\begin{align*}{}
\textstyle \exp\left( \frac{i}{h}\aad_{G^{(N+1)}}\right)P
&= \textstyle \exp\left( \frac{i}{h}\aad_{G_{N+1}}\right)(P^{(N)} + R_{N+1}) + \mathcal{O}(N+2)\\
&= \textstyle P^{(N)} + R_{N+1} + \frac{i}{h}\aad_{G_{N+1}} (P^{(N)} + R_{N+1}) + \mathcal{O}(N+2) \\
&= \textstyle P^{(N)} + R_{N+1} + \frac{i}{h}\aad_{G_{N+1}} (P_1+\mathcal{O}(2)) + \mathcal{O}(N+2) \\
&= \textstyle P^{(N)} + R_{N+1} + \frac{i}{h}\aad_{G_{N+1}} P_1 + \mathcal{O}(N+2),
\end{align*}
Because the bracket $\frac{i}{h}[\cdot,\cdot]$ reduces to the Poisson bracket when one of the arguments is at most quadratic, we have $\frac{i}{h}\aad_{G_{N+1}}P_1 = \frac{i}{h}[G_{N+1},P_1] = \poiss{G_{N+1},P_1}$, so 
\begin{align*}{}
\textstyle \exp\left( \frac{i}{h}\aad_{G^{(N+1)}}\right)P = P^{(N)} + R_{N+1} + \poiss{G_{N+1},P_1} + \mathcal{O}(N+2),
\end{align*}
as claimed.

To make the homogeneous order $N+1$ terms independent of $x$, it suffices to solve the cohomological equation,
\begin{align}\label{cohom}
\poiss{ G_{N+1},P_1} = \left\langle R_{N+1} \right\rangle - R_{N+1},
\end{align}
for $G_{N+1}$, where $\langle R_{N+1} \rangle$ is the $x$-average of $R_{N+1}$ as in (\ref{avg}). Indeed, assuming that we have done so, we then set $P_{N+1} = \left\langle R_{N+1} \right\rangle$ and let $R_{N+2}$ represent the homogenous order $N+2$ part of the $\mathcal{O}(N+2)$ error terms, which are analytic.  

To solve (\ref{cohom}), note that because $P_1 = a\cdot \xi$, we have 
\begin{align*}
\poiss{G_{N+1},P_1} = -H_{P_1} G_{N+1} = -(a\cdot \partial_x) G_{N+1}.
\end{align*}
Expanding $G_{N+1}$ and $R_{N+1}$ in Fourier series
\begin{align*}{}
G_{N+1}(x,\xi) = \sum_{k\in \mathbf{Z}^n} \widehat{G}_{N+1}(k,\xi)e^{i k\cdot x} \\
R_{N+1}(x,\xi) = \sum_{k\in \mathbf{Z}^n} \widehat{R}_{N+1}(k,\xi)e^{i k\cdot x},
\end{align*}
we have
\begin{align*}{}
(a\cdot \partial_x) G_{N+1} = \sum_{k\in \mathbf{Z}^n} (i a\cdot k)\widehat{G}_{N+1}(k,\xi)e^{i k\cdot x}.
\end{align*}
When $0 \neq k \in \mathbf{Z}^n$, because $a\cdot k$ does not vanish by the Diophantine condition (\ref{diof}), we may set $\widehat{G}_{N+1}(k,\xi) = -i \widehat{R}_{N+1}(k,\xi) / (a\cdot k)$, to obtain $(a\cdot \partial_x) G_{N+1} = R_{N+1} - \widehat{R}_{N+1}(0,\xi) = R_{N+1} - \left\langle R_{N+1} \right\rangle$ and thus solve the cohomological equation. Furthermore, again by (\ref{diof}), we have
\begin{align*}{}
|\widehat{G}_{N+1}(k,\xi)| = \frac{|\widehat{R}_{N+1}(k,\xi)|}{i |a\cdot k|} \leq C_0 |k|^{N_0}|\widehat{R}_{N+1}(k,\xi)|,
\end{align*}
and because $R_{N+1}$ is analytic in a neighborhood of $\mathbf{T}^n \times \{0\} \subseteq (\mathbf{C}^{n}/ 2\pi \mathbf{Z}^n) \times\mathbf{C}^n$, so too is $G_{N+1}$.
\end{proof}

%\textbf{Quantized normalization procedure}

\begin{remark}
As mentioned above, $\exp(\frac{i}{h}\aad_{G^{(N)}})$ formally represents conjugation by a microlocally defined Fourier integral operator $\exp(\frac{i}{h}G^{(N)})$, and such conjugation microlocally implements the symplectic transformation $\exp H_{G^{(N)}}$. To define such operators concretely, so that they act on microlocally defined distributions, one works on the FBI transform side in suitable weighted spaces of holomorphic functions. An additional assumption about smoothness in $\epsilon$ is required. See \cite{HSV} and the references there. 
\end{remark}

%\textbf{Normal form uniqueness}

\subsection{Symmetries and uniqueness of the normal form}\label{BNFuniq}

In this section we discuss symmetries and uniqueness of the Birkhoff normal form.

We first remark that there is always some flexibility in choosing action-angle variables $(x,\xi)$ near an invariant torus (here $x \in \mathbf{T}^n$ represents the angle variables, and $\xi \in \mathbf{R}^n$ the action variables). If $A\in \mbox{GL}\,(n,\mathbf{Z})$ and $\psi$ is any smooth function on $\mathbf{R}^n$, then 
\begin{align}\label{changewithA}
\kappa:(y,\eta) \mapsto (x,\xi) = (A^{-1}y + \partial \psi(\eta), A^t \eta) 
\end{align}
gives a well-defined smooth, symplectic change of variables (which is analytic if $\psi$ is analytic) on $\mathbf{T}^n \times \mathbf{R}^n \cong T^*\mathbf{T}^n$, and thus a new set of action-angle coordinates $(y,\eta)$. 

This transformation also preserves independence of the angle coordinate, and thus takes one asymptotic expansion which is in normal form (to order $N$) to another. More precisely, if 
\begin{align*}{}
p(x,\xi,\epsilon,h) = \tw p(\xi,\epsilon,h) + \mathcal{O}(N) = a\cdot \xi + \mathcal{O}(2),
\end{align*}
then in the new coordinates
\begin{align*}{}
(p\circ \kappa)(y,\eta,\epsilon,h) = \tw p(A^t\eta,\epsilon,h) + \mathcal{O}(N) = (Aa)\cdot \eta + \mathcal{O}(2).
\end{align*}

%In Appendix A.1 of Hofer-Zehnder \cite{HoferZehnder}, it is proven using generating functions that if $\kappa$ is a symplectomorphism of  $\mathbf{R}^{2n}$ mapping $\kappa:(y,\eta)\mapsto (x,\xi)$, which satisfies $\xi = b(\eta)$ for some diffeomorphism $b$ such that $\det(b_\eta) \neq 0$, then $\kappa$ must be of the form $(y,\eta) \mapsto ((b_\xi^t)^{-1}(y + \grad v),b(\xi))$ for some function $v$. Given a symplectomorphism of $T^*\mathbf{T}^{2n}$ satisfying the same properties, we may lift to a symplectomorphism of $\mathbf{R}^{2n}$ and apply this result. In order to 

Once we fix a choice of frequencies $a$, then, $A$ must be the identity (because of the Diophantine assumption), which means we only have maps of the form
\begin{align}\label{change}
(y,\eta) \mapsto (y +\partial\psi, \eta),
\end{align}
which do not affect a normal form expansion because the second coordinate is unchanged.

Our aim is to show that the formula (\ref{change}) gives all transformations which preserve independence of the angle variables in a general function or asymptotic expansion, while not affecting the frequencies $a$. Appendix A.1 of \cite{HoferZehnder} essentially contains a proof using generating functions that if a real symplectic diffeomorphism $(y,\eta)\mapsto (x,\xi)$ satisfies $\xi = b(\eta)$ with $\det(b_\eta) \neq 0$, then the mapping is of the form (\ref{changewithA}). The only difference is that the argument given there is local, and they end up with a more general type of transformation. In our case, it turns out one can make the formula apply globally, and then because we are on a torus, periodicity forces the simpler form (\ref{changewithA}), which reduces to (\ref{change}) assuming $a$ is unchanged. This argument will be given in Proposition \ref{uniq} below. 

Before proceeding, however, we note that because we have used complex symplectic transformations in our reduction to the normal form, it is natural to also consider symplectic biholomorphisms in a small complex neighborhood of $\mathbf{T}^n \times \{0\}$, for example allowing $\psi$ to be complex-valued in (\ref{change}). Here when we say a holomorphic transformation is symplectic or canonical we mean that the mapping preserves the standard symplectic form $\sigma$ on $\mathbf{C}^n / 2\pi \mathbf{Z}^n \times \mathbf{C}^n$, which is a form of type $(2,0)$, given in coordinates by 
\begin{align*}
\sigma = \sum_{j=1}^n d\xi_j \wedge dx_j.
\end{align*}

The standard fact that symplectomorphisms admit local generating functions carries over to the complex setting: 
\begin{prop}\label{genfn}
If $\kappa : (y,\eta)\mapsto (x,\xi) = (a(y,\eta),b(y,\eta))$ is a symplectic biholomorphism between small neighborhoods of $(y_0,\eta_0)\in \mathbf{C}^n$ and $(x_0,\xi_0)\in \mathbf{C}^n$, and $\det(b_\eta)\neq 0$, then there exists a holomorphic function $\phi(y,\xi)$ such that 
\begin{align*}
\kappa: (y,\partial_y \phi) \mapsto (\partial_\xi \phi, \xi). 
\end{align*}
\end{prop}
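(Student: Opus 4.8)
The plan is to reduce the existence of a generating function of the stated form to the classical (real) statement, using the fact that the graph of $\kappa$ is a Lagrangian submanifold with respect to the symplectic form $\sigma \ominus \sigma$ on $\mathbf{C}^{2n} \times \mathbf{C}^{2n}$, and that the hypothesis $\det(b_\eta) \neq 0$ lets us use $(y,\xi)$ as coordinates on this graph. Concretely, I would first observe that the $1$-form $\alpha = \xi\, dy + \eta\, dx = \xi\, dy + b(y,\eta)\, da(y,\eta)$ on $U$ is closed: its exterior derivative is $d\xi \wedge dy + d\eta \wedge dx$, and the condition that $\kappa$ be symplectic, $\kappa^*(d\xi \wedge dx) = d\eta \wedge dy$, is exactly what makes this vanish. (All computations are with holomorphic forms, so $d$ is the holomorphic exterior derivative and the usual identities hold verbatim.)

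Next I would change coordinates. By the holomorphic inverse function theorem, the condition $\det(b_\eta) \neq 0$ means that the map $(y,\eta) \mapsto (y,\xi) = (y, b(y,\eta))$ is a biholomorphism from a (possibly smaller) neighborhood of the relevant point onto its image; call the inverse $(y,\xi) \mapsto (y, \eta(y,\xi))$, which is holomorphic. Pushing $\alpha$ forward to these new coordinates gives a closed holomorphic $1$-form on a neighborhood $W \subseteq \mathbf{C}^{2n}$, which after shrinking $W$ to a ball is exact: $\alpha = d\phi$ for some holomorphic $\phi(y,\xi)$, obtained by integrating $\alpha$ along paths from a base point (the integral is path-independent by closedness and simple connectedness, and holomorphic dependence on the endpoint is standard). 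Comparing the $dy$ and $d\xi$ components of $d\phi = \xi\, dy + x\, d\xi$ — where $x = a(y,\eta(y,\xi))$ is now regarded as a function of $(y,\xi)$ — yields $\partial_y \phi = \xi$ and $\partial_\xi \phi = x$, which is precisely the assertion that $\kappa$ sends $(y, \partial_y \phi) \mapsto (\partial_\xi \phi, \xi)$.

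The only points requiring a little care, rather than genuine obstacles, are bookkeeping: verifying that $d\alpha = 0$ is equivalent to $\kappa^*\sigma = \sigma$ (a short computation with $x = a(y,\eta)$, $\xi = b(y,\eta)$, expanding $d\xi \wedge dx$ and using that mixed partials commute), and checking that the classical Poincaré lemma and the path-integral construction of a primitive go through for holomorphic forms on a complex ball — which they do, since a holomorphic $1$-form on a Stein (or merely simply connected) domain with vanishing $d$ is $d$ of a holomorphic function, by the same argument as in the smooth case applied to the holomorphic de Rham complex. There is no analytic difficulty special to the complex setting; the main ``work'' is simply organizing the change of variables so that $(y,\xi)$ are legitimate coordinates, which is exactly where the nondegeneracy hypothesis $\det(b_\eta) \neq 0$ enters. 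I expect the write-up to be a direct transcription of the real proof (e.g. as in Hofer--Zehnder's appendix) with ``smooth'' replaced by ``holomorphic'' throughout.
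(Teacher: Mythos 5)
Your approach is precisely the one the paper intends; the paper's ``proof'' is just the one-sentence remark that the classical argument (Hofer--Zehnder, Appendix A.1) carries over verbatim once one replaces the real implicit function theorem and Poincar\'e's lemma by their holomorphic counterparts, and you have filled in exactly that outline. There is, however, a bookkeeping error in the explicit $1$-form that would derail a write-up if left as is: you set $\alpha = \xi\,dy + \eta\,dx$, whose exterior derivative is $d\xi\wedge dy + d\eta\wedge dx$, and this does \emph{not} vanish under the symplectic condition $d\xi\wedge dx = d\eta\wedge dy$ (e.g.\ for $\kappa = \mathrm{id}$ it equals $2\,d\eta\wedge dy$). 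The form you want is
\[
\alpha \;=\; \eta\,dy \;+\; x\,d\xi \;=\; \eta\,dy \;+\; a(y,\eta)\,db(y,\eta),
\]
for which $d\alpha = d\eta\wedge dy + dx\wedge d\xi = -(d\xi\wedge dx - d\eta\wedge dy)$ vanishes iff $\kappa^*\sigma = \sigma$. After passing to $(y,\xi)$-coordinates via the holomorphic inverse function theorem (this is where $\det(b_\eta)\neq 0$ enters, as you say) and applying the holomorphic Poincar\'e lemma on a small ball, a primitive $\phi$ satisfies $d\phi = \eta\,dy + x\,d\xi$, hence $\partial_y\phi = \eta$ and $\partial_\xi\phi = x$, which is what the proposition asserts; as written you instead arrive at $\partial_y\phi = \xi$, which is not the claimed conclusion. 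With this corrected, the rest — the holomorphic inverse function theorem, integrating a closed holomorphic $1$-form on a simply connected neighborhood — is sound and matches the paper's intended argument.
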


The proof is exactly the same as in the real case (see appendix A.1 of \cite{HoferZehnder} for example), if we note that the implicit function theorem holds for holomorphic maps, and where normally we use Poincar\'e's lemma we instead use the Dolbeault-Grothendieck lemma.% (for $\partial$ instead of $\overline\partial$, as normally stated).

Using this fact, we may now prove

\begin{prop} \label{uniq}
Suppose $\kappa\maps (y,\eta) \mapsto (x,\xi)$ is a symplectic biholomorphism between open sets $U,V \subseteq (\mathbf{C}^n / 2\pi\mathbf{Z}^n) \times \mathbf{C}^n$, where $U$ and $V$ are small neighborhoods of $(\mathbf{R}^n/2\pi\mathbf{Z}^n) \times \{0\}$. Suppose $\xi = b(\eta)$, where $b$ is a biholomorphism defined near $0 \in \mathbf{C}^n$, such that $b(0) = 0$, $b_\eta(0) = 1$. Then in a small enough neighborhood of $(\mathbf{R}^n/2\pi\mathbf{Z}^n) \times \{0\}$, $\kappa$ satisfies 
\begin{align*}
\kappa: (y,\eta) \mapsto (y +  \partial \psi(\eta), \eta).
\end{align*}
for some analytic function $\psi$ defined in a neighborhood of $0 \in \mathbf{C}^n$.
\end{prop}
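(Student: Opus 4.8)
The plan is to use the generating function from the preceding Proposition together with the hypothesis $\xi = b(\eta)$ to pin down the form of the generating function, and then exploit $2\pi\mathbf{Z}^n$-periodicity in the angle variable to eliminate the remaining freedom. First I would apply the previous Proposition: since $\kappa$ is a symplectic biholomorphism with $\det(b_\eta) \neq 0$ (which holds near $0$ because $b_\eta(0) = 1$), there is a holomorphic $\phi(y,\xi)$ with $\kappa : (y, \partial_y \phi) \mapsto (\partial_\xi \phi, \xi)$. Comparing with the given data, the relation $\eta = \partial_y \phi(y,\xi)$ must be equivalent to $\xi = b(\eta)$; in particular $\partial_y \phi(y,\xi)$ is independent of $y$, so $\partial_y^2 \phi \equiv 0$ and $\phi$ is affine in $y$: write $\phi(y,\xi) = c(\xi)\cdot y + \psi_0(\xi)$ for holomorphic $c, \psi_0$ near $0$. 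Then $\eta = \partial_y\phi = c(\xi)$, so $c = b^{-1}$, and the second component of $\kappa$ reads $x = \partial_\xi \phi = (\partial_\xi c(\xi))^t\, y + \partial_\xi \psi_0(\xi)$.

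Next I would invoke periodicity. Since $\kappa$ is well-defined on a neighborhood of $(\mathbf{R}^n/2\pi\mathbf{Z}^n)\times\{0\}$ and maps into $(\mathbf{C}^n/2\pi\mathbf{Z}^n)\times\mathbf{C}^n$, replacing $y$ by $y + 2\pi e_j$ must change $x$ by an element of $2\pi\mathbf{Z}^n$ (continuously in $\xi$, hence constant). From $x = (\partial_\xi c(\xi))^t y + \partial_\xi\psi_0(\xi)$ this forces $2\pi (\partial_\xi c(\xi))^t e_j \in 2\pi\mathbf{Z}^n$ for each $j$ and each $\xi$, i.e. the matrix $(\partial_\xi c)^t =: A$ has integer entries and is locally constant, hence constant; moreover it must be invertible (since $\kappa$ is a biholomorphism, equivalently since $b_\eta$ is invertible), so $A \in \GL(n,\mathbf{Z})$. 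Thus $c(\xi) = A^t\xi + \mathrm{const}$, and since $\eta = c(\xi)$ with $b(0)=0$ (so $c(0)=0$) we get $\eta = A^t \xi$, i.e. $b(\eta) = (A^t)^{-1}\eta = A\eta$ because $A$ is integer and unimodular; but $b_\eta(0) = 1$ forces $A = 1$. Hence $\xi = \eta$, $c(\xi) = \xi$, and $x = y + \partial_\xi\psi_0(\xi)$, which is exactly the claimed form with $\psi := \psi_0$.

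The one point requiring a little care — and what I expect to be the main obstacle — is the passage from "$\kappa$ is a biholomorphism near $(\mathbf{R}^n/2\pi\mathbf{Z}^n)\times\{0\}$" to "the generating function $\phi$ exists on a full neighborhood of this set, not just locally." The cited argument in Hofer–Zehnder is local, and globalizing it is the step flagged as nontrivial in the discussion preceding the Proposition. The key observation is that once we know $\xi = b(\eta)$ globally with $b$ a fixed biholomorphism (independent of $y$), the first component $x = a(y,\eta)$ is the only thing that varies with $y$, and the local generating functions produced on overlapping charts differ by functions of $\xi$ alone (since $\eta = \partial_y\phi$ is globally prescribed), so they patch — after possibly subtracting locally constant integer multiples of $2\pi y_j$ — into a single $\phi$ affine in $y$ on the whole neighborhood. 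Alternatively, and perhaps more cleanly, one can avoid global generating functions altogether: work directly with $\omega := \kappa^*(\sum d\xi_j \wedge dx_j) - \sum d\eta_j \wedge dy_j = 0$, substitute $\xi = b(\eta)$ to get $\sum_{j,k} \frac{\partial b_j}{\partial\eta_k} d\eta_k \wedge dx_j = \sum d\eta_j \wedge dy_j$, and read off that $\sum_j \frac{\partial b_j}{\partial \eta_k} \frac{\partial x_j}{\partial y_\ell} = \delta_{k\ell}$ and $\sum_j \frac{\partial b_j}{\partial \eta_k}\frac{\partial x_j}{\partial \eta_\ell}$ is symmetric in $k,\ell$; the first relation shows $\partial x/\partial y = (b_\eta)^{-t}$ is independent of $y$, and then periodicity in $y$ and $b_\eta(0)=1$ finish as above. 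I would present the proof via this second route, as it keeps everything global and reduces the analytic input to the Dolbeault–Grothendieck lemma only where strictly needed.
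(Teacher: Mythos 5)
Your first route is essentially the paper's: lift to the cover, produce a global generating function $\phi(y,\xi)$, observe that $\partial_y\phi = b^{-1}(\xi)$ is $y$-independent so $\phi = b^{-1}(\xi)\cdot y + \psi_0(\xi)$, and then use periodicity in $y$ to force $\partial_\xi(b^{-1}) \in \GL(n,\mathbf{Z})$ to be constant, hence the identity by $b_\eta(0) = 1$. The paper's patching of local generating functions is slightly tighter than what you wrote: on overlaps they agree up to additive \emph{constants} (both $\partial_y\phi$ and $\partial_\xi\phi$ are globally prescribed by $\kappa$), not merely up to functions of $\xi$, which is what makes the globalization immediate on the simply connected cover. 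Your second, preferred route is genuinely different and, I think, cleaner: reading off from $\kappa^*\sigma = \sigma$ and $\xi = b(\eta)$ that $(b_\eta)^t\,\partial x/\partial y = I$ gives $\partial x/\partial y = (b_\eta)^{-t}$ manifestly independent of $y$ with no generating function at all, so periodicity plus $b_\eta(0) = 1$ forces $b_\eta \equiv 1$ and $b = \mathrm{id}$; the remaining symmetry of $(b_\eta)^t\,\partial x/\partial\eta$ then makes $x - y$ a closed holomorphic $1$-form in $\eta$, and Dolbeault-Grothendieck supplies $\psi$. This eliminates entirely the patching step that the paper flags as the nontrivial adaptation of Hofer-Zehnder, and localizes the analytic input to one final application, so it buys a real simplification. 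One small slip in your write-up: $(A^t)^{-1} = A$ is false for general $A \in \GL(n,\mathbf{Z})$ (it holds only for orthogonal $A$), but the aside is inessential since $b_\eta = (A^t)^{-1}$ constant together with $b_\eta(0) = 1$ already forces $A = 1$.
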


\begin{proof}
%Suppose that $(y,\eta)\mapsto (x,\xi)$ defines a symplectomorphism of $T^*\mathbf{T}^n$ which satisfies that $\xi = \alpha(\eta)$ is a diffeomorphism  such that $\alpha(0) = 0$ and $\alpha_\eta(0) = 1$. 

We may lift $\kappa$ to a mapping between small neighborhoods of $\mathbf{R}^n \times \{0\} \subseteq \mathbf{C}^{2n}$. We use the same notations for the lift. By continuity, in small enough neighborhoods we have that $\det(b_\eta) \neq 0$, so by Proposition \ref{genfn}, locally we may find an analytic generating function $\phi(y,\xi)$ so that the mapping is given by
\begin{align}\label{gen}
(y,\partial_y\phi)\mapsto (\partial_\xi \phi,\xi).
\end{align}
When two neighborhoods overlap, the local generating functions $\phi,\tw \phi$ must agree modulo constants, and in this way we may define a globally defined analytic generating function $\phi(y,\xi)$, so that (\ref{gen}) is satisfied everywhere. Moreover the derivatives of $\phi$ are $2\pi\mathbf{Z}^n$-periodic, so (\ref{gen}) descends to $U,V$. 

Still working with the lift, if $\partial_y \phi = \beta(\xi)$ is the inverse of the diffeomorphism $b$, then for some function $\tw \psi$, we have
\begin{align*}{}
\phi(y,\xi) &= \beta(\xi)\cdot y + \tw \psi(\xi) \\
\partial_\xi \phi &= \partial_\xi \beta\cdot y + \partial_\xi \tw \psi. 
\end{align*}
Thus we have
\begin{align*}{}
(y,\beta(\xi)) \mapsto (\partial_\xi \beta\cdot y + \partial_\xi \tw \psi,\xi).
\end{align*}
Because the mapping is $2\pi\mathbf{Z}^n$-periodic in $x$ and $y$, we must have that for each $\xi_0 \in \mathbf{C}^n$, the assignment $\mathbf{Z}^n \ni n  \mapsto \partial_\xi \beta(\xi_0) \cdot n$ defines an automorphism of $\mathbf{Z}^n$. By continuity, $\partial_\xi\beta \in\mbox{GL}\,(n,\mathbf{Z})$ must be constant, and since $\beta(0) = 0$, with $\partial_\xi \beta(0) = 1$, $\beta$ itself must be the identity mapping. 

Therefore, setting $\psi = \tw \psi \circ b(\eta)$, the original mapping is given by
\begin{align*}
(y,\eta) \mapsto (y +\partial \psi(\eta), \eta),
\end{align*}
with generating function $\phi(y,\xi) = \xi\cdot y + \tw \psi(\xi)$, and $\tw \psi$ is now allowed to be complex-valued. 

\end{proof}

\begin{remark}
Note that Proposition \ref{uniq} only classifies transformations which preserve independence of the angular variables for a {\it general} symbol $\tw p(\xi,\epsilon,h) = a\cdot \xi + \mathcal{O}(2)$ (while leaving the frequencies $a$ unchanged). That is, if a transformation has this property with respect to {\it any} such $\tw p$, then it is easy to see the transformation must satisfy the hypotheses of Proposition \ref{uniq}. A priori, some specific symbol may admit more symmetries than the type described. See also Corollary \ref{coruniq}.
\end{remark}

\section{Eigenvalue asymptotics}\label{spectralasymptotics}
%\textbf{Statement of direct results}
 
Under the assumptions stated in Section \ref{assumptions}, Theorem 1.1 in \cite{HSV} implies that if $h^K \leq \epsilon\leq h^\delta$, the eigenvalues of $P_\epsilon = P + i\epsilon Q$ which lie in (\ref{box}) have an asymptotic expansion given in terms of the QBNF of $P_\epsilon$. 

\begin{theorem}[\cite{HSV}]\label{hsvthm}
  Let $P_\epsilon$ satisfy all the assumptions of Section \ref{assumptions}. For each $N \geq 1$, let $P^{(N)}$ be the result of applying Proposition \ref{BNF} to $P_\epsilon$, and let us write
\begin{align}\label{spec}
P^{(N)} = P^{(N)}(\xi,\epsilon,h) = \sum_{j+2(m+n) \leq N} P^{(\infty)}_{jmn}(\xi)\epsilon^m h^n,
\end{align}
where $P^{(\infty)}_{jmn}$ is a homogeneous polynomial of degree $j$ in $\xi$ which does not depend on $N$.

For any $0 < \delta < 1$, and any fixed integer $K$, suppose that $h^K \leq \epsilon \leq h^\delta$. Recall the complex window (\ref{box}), where we now take $C > 0$ to be sufficiently large. Then for each $N$, as $h\to 0$, the quasi-eigenvalues
\begin{align}\label{quasi}
\textstyle  P^{(N)}\left(h\left(k-\frac{k_0}{4}\right) - \frac{S}{2\pi},\epsilon,h\right),\quad k \in \mathbf{Z}^2
\end{align}
are equal to the eigenvalues of $P_\epsilon$ modulo $\mathcal{O}(N+1)$ in (\ref{box}), in the sense that for all sufficiently small $h$, there is a one-to-one partial function from the set of quasi-eigenvalues to $\spec(P_\epsilon)$, equal to $1 + \mathcal{O}(N+1)$ uniformly, which is defined whenever a quasi-eigenvalue or the targeted true eigenvalue lies in (\ref{box}). 
\end{theorem}

The formulation is slightly different from Theorem 1.1 in \cite{HSV} because of the different grading we have chosen, but the same proof applies. See also Theorems 5.1 and 5.2 of \cite{HSV} for somewhat more direct analogues to the above with the alternate grading.

The expression which appears in place of $\xi$ in (\ref{quasi}) is the result of a Bohr-Sommerfeld type condition. The constant vector $k_0\in \mathbf{Z}^2$ contains the Maslov indices and $S\in \mathbf{R}^2$ the actions along a set of fundamental cycles of $\Lambda$, for example $\{x_1 = 0\}$, $\{x_2 = 0\}$, with respect to action-angle variables chosen so that $\Lambda$ is represented as $\{\xi = 0\} \subseteq T^*\mathbf{T}^2$. For more details on this point, see \cite{HSV}, as well as section 2 of \cite{HS}.

\section{Main Result}\label{Main}
 
Our main result is a uniqueness statement asserting that if the eigenvalues corresponding to invariant torus $\Lambda$ are the same for operators $P + i\epsilon Q_1,P+i\epsilon Q_2$, then they have the same QBNF near $\Lambda$.

\begin{theorem}\label{thm1}
Suppose that $P_1 = P+i\epsilon Q_1$, $P_2 = P+i\epsilon Q_2$ are operators satisfying the assumptions  described in section \ref{assumptions}, where $P = P(x,hD_x;h)$ is a fixed, self-adjoint operator with principal symbol $p$, $\Lambda$ is an $H_p$-invariant Lagrangian torus satisfying the Diophantine condition (\ref{diof}), and $Q_1$, $Q_2$ are operators with principal symbols $q_1,q_2$, respectively, such that $\left\langle q_1 \right\rangle|_{\Lambda} = \left\langle q_2 \right\rangle|_{\Lambda}$, and $\Lambda$ satisfies the global condition (\ref{global}) with respect to $P + i\epsilon Q_\nu$, $\nu = 1,2$.

Fix $0 < \delta < 1$, and let the $(\epsilon,h)$-dependent rectangle $R_\delta \subseteq \mathbf{C}$ be as described in equation (\ref{box}), 
\begin{align}\label{Rdelta}
  R_\delta = \left \{z \in \mathbf{C}\,\left|\, |\Real z| < \frac{h^\delta}{C},\ |\Imag z - \epsilon \Real F| < \frac{\epsilon h^\delta}{C}\right.\right \},
\end{align}
where $C > 0$ is large enough, with $F = \left\langle q_1 \right\rangle|_{\Lambda} = \left\langle q_2 \right\rangle|_{\Lambda}$ . For $\nu = 1,2$, let $P^{(\infty)}_\nu$, denote the QBNF of $P + i\epsilon Q_\nu$, which is a formal asymptotic expansion of the form 
\begin{align*}{}
P^{(\infty)}_\nu(\xi,\epsilon,h) \sim \sum_{j,m,n} P^{(\infty)}_{\nu,jmn}(\xi) \epsilon^m h^n = a\cdot \xi + i\epsilon F + p_{01}h + \mathcal{O}((|\xi|,\epsilon,h)^2), 
\end{align*}
% Let $\mathcal{D} \subseteq \{(\epsilon,h) \st h^N \leq \epsilon \leq h^\delta\} \subseteq \mathbf{R}_+\times \mathbf{R}_+$. Assume that $(0,0)\in \mathbf{R}^2$ is a limit point of $\mathcal{D}$. Let $\mathcal{I}_\mathcal{D} \subseteq \mathbf{C}[[\xi,\epsilon,h ]]$ be the ideal generated by the set of power series which are independent of $\xi$, and which represent functions of $(\epsilon,h)$ which are $\mathcal{O}(h^\infty)$ on $\mathcal{D}$. Suppose that for all $(\epsilon,h) \in \mathcal{D}$, we have 
%
where $P^{(\infty)}_{\nu,jmn}$, $j\geq 0$, $m \geq 0$, $n\geq 0$, is a homogeneous polynomial in $\xi$ of degree $j$. Suppose that for all sufficiently small $h$ and for all $\epsilon$ in the range $h^K \leq \epsilon \leq h^\gamma$, where $K \geq 1$ and $\gamma > \frac{1}{2}$ are fixed, we have
\begin{align*}{}
\spec(P+i\epsilon Q_1)\cap R_\delta = \spec(P+i\epsilon Q_2)\cap R_\delta. 
\end{align*}
Then the QBNF's $P^{(\infty)}_1$, $P^{(\infty)}_2$ are equal, i.e., for all $j,m,n$ we have $P^{(\infty)}_{1,jmn} = P^{(\infty)}_{2,jmn}$.
\end{theorem}

Whenever $k \in \mathbf{Z}^2$, in what follows we will use the notation $\xi_k = h(k-\frac{k_0}{4})- \frac{S}{2\pi}$. To simplify the notation further, let us write $P^{(\infty)}_\nu(k)$ for the eigenvalue with asymptotic expansion given by $P^{(\infty)}_\nu(\xi_k,\epsilon,h)$, according to Theorem \ref{hsvthm}.

We first prove a lemma which says that for an operator $P_\epsilon$, we can recover the Bohr-Sommerfeld index $k$ from the associated eigenvalue $P^{(\infty)}_\epsilon(k)$ when, as in the hypotheses of the theorem, we have a bound $\epsilon\lesssim  h^\gamma$ for some $\gamma> \frac{1}{2}$. We remark that the expononent $\frac{1}{2}$ is not fundamental, but essentially comes out of the proof.

\begin{lemma}\label{lemma}
Let $P^{(\infty)}_1,P^{(\infty)}_2$ represent two QBNF's arising from operators which satisfy the hypotheses of Theorem \ref{thm1}. In particular, assume 
\begin{align}\label{pstart}
P^{(\infty)}_\nu(\xi,\epsilon;h) = a\cdot \xi + i\epsilon F + p_{01}h + \mathcal{O}((|\xi|,\epsilon,h)^2), \quad \nu = 1,2,
\end{align}
where $p_{01}$ is a real constant and the vector $a$ satisfies the Diophantine condition
\begin{align*}{}
|a\cdot k| \geq \frac{1}{C_0|k|^{N_0}}, \qquad 0 \neq k \in \mathbf{Z}^2.
\end{align*}
Then for any $\gamma > \frac{1}{2}$, if $\epsilon \lesssim  h^\gamma$, there exists $\beta_0 < 1$ and $h_0 > 0$ such that for all $\beta \in [\beta_0,1)$, when $h\in [0,h_0)$, if $k,\ell \in \mathbf{Z}^2$ satisfy
\begin{align*}
P^{(\infty)}_1(k) = P^{(\infty)}_2(\ell) \mod \mathcal{O}(h^\infty),
\end{align*}
with $P^{(\infty)}_1(k),P^{(\infty)}_2(\ell)$ lying in the window $R_\beta$, then $k = \ell$. 
\end{lemma}

\begin{remark}
Note that we want $0 < \beta < 1$ in order to have many eigenvalues $P^{(\infty)}_\nu(k)$ which lie in the rectangle $R_\beta$. Indeed, the dimensions of $R_\beta$ are $\sim h^\beta\times \epsilon h^\beta$, so by the nondegeneracy assumption (\ref{nondeg}),  $\#\spec(P_\epsilon)\cap R_\beta \sim h^{2\beta-2}$ and if $P^{(\infty)}_\nu(k)\in R_\beta$ then $|\xi_k| = \mathcal{O}(h^\beta)$. 
\end{remark}

\begin{proof} By the hypotheses on $P^{(\infty)}_1,P^{(\infty)}_2$, and in particular because the the two expansions have several terms in common, as indicated in (\ref{pstart}), we have
\begin{align}\label{difference}
P^{(\infty)}_1(k)-P^{(\infty)}_2(\ell) = ha\cdot(\xi_k-\xi_\ell) + \mathcal{O}((|\xi_k|,\epsilon,h)^2) + \mathcal{O}((|\xi_\ell|,\epsilon,h)^2). 
\end{align}
By the Diophantine condition on $a$, we have 
\begin{align*}
|a\cdot(\xi_k-\xi_\ell)| = h|a\cdot(k-\ell)| \gtrsim \frac{h}{|k-\ell|^{N_0}}, \quad k\neq \ell. 
\end{align*}
Let us consider eigenvalues $P^{(\infty)}_\nu(k)\in R_\beta$ with $k\in \mathbf{Z}^2$, for some $\beta < 1$ to be determined. For any $k,\ell$ such that $|\xi_k|,|\xi_\ell|\lesssim h^\beta$, we have $h|k-\ell| = |\xi_k-\xi_\ell| \lesssim h^\beta$, so $|k-\ell| \lesssim h^{\beta-1}$. Hence, 
\begin{align}\label{hence}
h|a\cdot(k-\ell)| \gtrsim  \frac{h}{|k-\ell|^{N_0}} \gtrsim \frac{h}{(h^{\beta-1})^{N_0}} = h^{1+{N_0}(1-\beta)}, \quad k\neq \ell. 
\end{align}
The lemma will follow if we can show that by choosing $\beta$ close enough to 1, the above dominates the contributions of the error terms in (\ref{difference}). 

Thus, assuming $|\xi_k| \lesssim h^\beta$ and $\epsilon\leq  h^\gamma$, 
we estimate a typical term of one of the QBNF's:
\begin{align*}{}
|P^{(\infty)}_{\nu,jmn}(\xi)\epsilon^mh^n| \lesssim |\xi|^j \epsilon^m h^n| \lesssim h^{j\beta + m \gamma + n}. 
\end{align*}
To have $P^{(\infty)}_{\nu,jmn} = o(a\cdot(k-\ell))$, in view of (\ref{hence}), it suffices to have $h^{j\beta + m\gamma + n} = o(h^{1+{N_0}(1-\beta)})$, or
\begin{align*}{}
1+{N_0}(1-\beta) < j\beta + m\gamma + n.
\end{align*}
After rearranging, this is equivalent to 
\begin{align}\label{betacondition}
\frac{{N_0}+1-n-m\gamma}{{N_0}+j} < \beta. 
\end{align}
Examining the left hand side, we see that it is strictly less than 1, except in the following cases:
\begin{itemize}
\item[(a)] $j = 1$ and $m = n = 0$
\item[(b)] $j = m = 0$, and $n = 1$
\item[(c)] $j = n = 0$ and $m\gamma < 1$. 
\end{itemize}
However, the terms corresponding to these cases are exactly those written out in the right hand side of (\ref{pstart}). Cases (a) and (b) correspond to the terms $a\cdot \xi$ and $p_{01}h$, respectively, while our hypothesis $\gamma > \frac 12$ is designed so that case (c) only applies when $m = 1$, which corresponds to the term $i\epsilon F$. 

It follows that the exceptional cases above occur when $j + m + n = 1$. The left hand side of (\ref{betacondition}) must be maximized in one of the cases where $j + m + n = 2$, as increasing any of $j,m,n$ only makes the left hand side of (\ref{betacondition}) smaller. Therefore, if we choose $\beta$ so that
\begin{align*}
\frac{{N_0}+1-n-m\gamma}{{N_0}+j} < \beta < 1
\end{align*}
in each of these finitely many cases (which we will not list), then these inequalities also hold for all $j,m,n$ with $j + m + n \geq 2$. 

Therefore, for some large $M$, we have that for sufficiently small $h$, 
\begin{align*}
|P^{(\infty)}_1(k)&-P^{(\infty)}_2(\ell)| \\
&\geq h|a\cdot(k-\ell)| - \sum_{2 \leq j + m + n \leq M} |(P^{(\infty)}_{1,jmn}(\xi_k)-P^{(\infty}_{2,jmn}(\xi_\ell)) \epsilon^m h^n| - \mathcal{O}((|\xi_k|,|\xi_\ell|,\epsilon,h)^M) \\
&\gtrsim h^{1+{N_0}(1-\beta)} - \mathcal{O}(1) \sum_{2 \leq j+ m + n \leq M} h^{j\beta + m\gamma +n} - \mathcal{O}(h^{\frac{M}{2}}) \\
&\gtrsim h^{1+N_0(1-\beta)} - \mathcal{O}(h^{\frac{M}{2}}) \\
&\gtrsim h^{1+N_0(1-\beta)},
\end{align*}
where the last estimate holds when $M$ is chosen sufficiently large. The bound $\mathcal{O}(h^\frac{M}{2})$ comes from the fact that $\epsilon \lesssim h^\gamma$ with $\gamma > \frac{1}{2}$ and $|\xi_k|,|\xi_\ell| \lesssim h^\beta$, where $\beta$ is close to 1 (and so greater than $\frac{1}{2}$, we may assume).  Note that the implicit constants depend only on the terms of the QBNFs and not on $k,\ell$, so we get a uniform lower bound on the size of $P^{(\infty)}_1(k)-P^{(\infty)}_2(\ell)$ when $k \neq \ell$ and the eigenvalues lie in $R_\beta$. 

Summing up, we have found that there exists $\beta < 1$ such that if $P^{(\infty)}_1(k),P^{(\infty)}_2(\ell) \in R_\beta$, and $k\neq \ell$, then when $h$ is sufficiently small,
\begin{align*}
|P^{(\infty)}_1(k)-P^{(\infty)}_2(\ell)| \gtrsim h^{1+N_0(1-\beta)}
\end{align*}
where the implicit constant does not depend on $k,\ell$. Therefore, when $h$ is smaller than some small constant, which does not depend on $k,\ell$, if $P^{(\infty)}_1(k) = P^{(\infty)}_2(\ell)$, we must have $k = \ell$. 

\end{proof}

We now proceed to the proof of the main theorem. 

\begin{proof}[Proof of Theorem \ref{thm1}]
Theorem \ref{hsvthm} applies to $P_1,P_2$, meaning that their eigenvalues in a rectangle $R_\delta$ as in (\ref{Rdelta}) have asymptotic expansions of the form (\ref{quasi}), in terms of the QBNF's $P^{(\infty)}_1,P^{(\infty)}_2$. We will show one can recover the QBNF from the eigenvalues in any window $R_{\delta'}$ where $\beta_0 \leq \delta' < 1$, with $\beta_0$ chosen as in Lemma \ref{lemma}. Note that $\delta' \leq \delta$ implies $R_{\delta} \subseteq R_{\delta'}$, so we may assume without loss of generality that $\delta' = \delta$, and simply refer to both as $\delta$.

Suppose that $P^{(\infty)}_{1,jmn} \neq P^{(\infty)}_{2,jmn}$ for some index $(j,m,n)$. Then $P^{(\infty)}_{1,jmn} - P^{(\infty)}_{2,jmn}$ is a homogeneous polynomial of degree $j$ in $\xi$ which does not vanish identically. By homogeneity, we can find an open subset of the unit circle $U_{jmn} \subseteq \{|\xi| = 1\} \subseteq \mathbf{R}^2$ on which this polynomial is bounded away from 0 in absolute value. For all $h$ sufficiently small, there exists $\xi_k = h(k-\frac{k_0}{4}) - \frac{S}{2\pi}$ such that 
\begin{align}\label{xik}
|\xi_k| \sim h^\beta\quad \text{with}\quad \xi_k/|\xi_k| \in U_{jmn}.
\end{align}
For such $\xi_k$, by homogeneity we have
\begin{align*}{}
|P^{(\infty)}_{1,jmn}(\xi_k)\epsilon^mh^n-P^{(\infty)}_{2,jmn}(\xi_k)\epsilon^mh^n| \sim \epsilon^m h^{j\beta + n}.
\end{align*}
Let us also take $\epsilon = h^\gamma$, so that $\epsilon^m h^{j\beta +n} \sim h^{j \beta +m\gamma + n}$. 

Without loss of generality we may assume that, possibly after increasing $\beta$ and $\gamma$ slightly, we have that $j \beta +m\gamma+ n = j'\beta + m'\gamma + n'$ implies $m=m'$, $j=j'$, $n=n'$ (we just need $1,\beta,\gamma$ to be independent over the rationals). Then we have a total ordering of indices $(j,m,n)$ according to the size of the expression $h^{j\beta + m\gamma + n}$. Also, because $\beta, \gamma > 0$, for any fixed $M$ > 0, there are only finitely many indicies $(j,m,n)$ such that $j\beta + m\gamma + n \leq M$. Therefore, of the indices $(j,m,n)$ for which  $P^{(\infty)}_{1,jmn} \neq P^{(\infty)}_{2,jmn}$, there is a unique index for which $j\beta + m\gamma + n$ is minimal. From now on, let $(j_0,m_0,n_0)$ stand for this index. 

Then with $\xi_k$ satisfying (\ref{xik}) with $j=j_0$, $m=m_0$, $n=n_0$, and $\epsilon = h^\gamma$, we have
\begin{align}
|P^{(\infty)}_1(k) - P^{(\infty)}_2(k)| 
= (P^{(\infty)}_{1,j_{_0}m_{_0}n_{_0}}-P^{(\infty)}_{2, j_{_0} m_{_0} n_{_0}})h^{j_0\beta + m_0\gamma +n_0} + o(h^{j_0\beta+m_0\gamma + n_0}),
\end{align}
and so for $h$ sufficiently small, 
\begin{align}\label{lowerbound}
|P^{(\infty)}_1(k) - P^{(\infty)}_2(k)| \gtrsim h^{j_0\beta + m_0\gamma + n_0}.
\end{align}
By assumption, $P^{(\infty)}_1(k) = P^{(\infty)}_2(\ell)$ for some $\ell \in \mathbf{Z}^2$, and then by Lemma \ref{lemma}, we have $k = \ell$ for all $h$ sufficiently small. Thus $P^{(\infty)}_1(k) = P^{(\infty)}_2(k)$, which contradicts (\ref{lowerbound}).

Therefore, we have $P^{(\infty)}_{1,jmn} = P^{(\infty)}_{2,jmn}$ for all $j,m,n$, so the two QBNF's are equal. 
\end{proof}

\begin{remark}
We remark that the complete integrability assumptions were unnecessary in the proof of Theorem \ref{thm1}. In principle, one only needs that the asymptotic expansions given in Theorem 1.1 of \cite{HSV} are valid, as well as the Diophantine assumption, and so the main result may hold more generally. 
\end{remark}

\begin{remark}
We note as an addendum to the discussion in Section \ref{BNFuniq} that when an operator satisfies the hypotheses of Theorem \ref{thm1}, it implies uniqueness of the QBNF near the Diophantine torus $\Lambda$. Indeed, Theorem \ref{hsvthm} describes the eigenvalues of such an operator in a window $R_\beta\subseteq \mathbf{C}$ corresponding to $\Lambda$, and Theorem \ref{thm1} implies QBNF near $\Lambda$ can be (uniquely) recovered from the eigenvalues if we know the frequencies $a$. This implies that the QBNF is unique up to the choice of action-angle variables. Thus we have
\end{remark}
\begin{corollary}\label{coruniq}
If $P_\epsilon$ is an operator satisfying the hypotheses on the operators in Theorem \ref{thm1}, the QBNF of $P_\epsilon$ near the Diophantine torus $\Lambda$ is uniquely defined up to the choice of action-angle variables. 
\end{corollary}

\begin{remark}
In the proof of Theorem \ref{thm1}, we exploited the fact that Theorem \ref{hsvthm} applies for all $\epsilon$ in a range $h^K \leq \epsilon \leq h^\delta$ to assume that $\epsilon = h^\gamma$, for a favorable choice of $\gamma$. It is natural to consider situations when $\epsilon$ is, for example, a function of $h$, or possibly has a more general sort of degenerate relationship with $h$. For example:

%Here we explore what may happen when we do not have this flexibility. We discuss some situations where there is a degenerate relationship between the parameters $\epsilon$ and $h$, leading to some symmetries of the QBNF of an operator. 

\begin{enumerate}
\item The damped wave operator on a compact manifold may be studied as a non-selfadjoint perturbation of the selfadjoint operator $-h^2\Delta$, where $\Delta$ is the Laplace-Beltrami operator, and the strength of the non-selfadjoint perturbation is $\epsilon = h$ (see \cite{SjDWE}). Then, for instance, since $\epsilon^2 = \epsilon h = h^2$, it is meaningless to ask for the coefficients of these terms individually in some QBNF for the operator. 

\item If $P^{(\infty)}$ is an asymptotic expansion satisfying (formally)\begin{align}\label{rmultiple} P^{(\infty)}(\xi,\epsilon,h) = (\epsilon-h)(\epsilon^2-h)\tilde P^{(\infty)}(\xi,\epsilon,h), \end{align} for some asymptotic expansion  $\widetilde P^{(\infty)}$,  then when we restrict to $\epsilon\in \{h,h^\frac{1}{2}\}$,  $P^{(\infty)}$ represents the zero function, and a QBNF may only be uniquely-defined modulo expansions of this form.
\end{enumerate}

More generally, consider situations where $\epsilon,h$ satisfy $r(\epsilon,h) = 0$, where $r \in C^\infty(\mbox{neigh}(0,0))$ is not flat at (0,0), and $r(0,0) = 0$. Replacing $r$ by $h^{-n_0}r$ if necessary, we may assume without loss of generality that $\partial_\epsilon^k r(0,0)\neq 0$ for some $k \geq 1$.

If $\partial_\epsilon r(0,0) \neq 0$, then by the implicit function theorem we have locally $\epsilon = f(h)$ for some smooth function $f(h)$. Thus 
\begin{align*}
r(\epsilon,h) = c(\epsilon,h)(\epsilon-f(h)),
\end{align*}
where $c(0,0) \neq 0$. Then $r(\epsilon,h) = 0$ precisely when $\epsilon = f(h)$. 

In general, if we have $\partial_\epsilon^k r(0,0) = 0$, $0 \leq k \leq m-1$, and $\partial_\epsilon^m r(0,0) \neq 0$, then the Malgrange preparation theorem (cf. \cite{HormanderVolI}, Section 7.5) implies a factorization
\begin{align}\label{malgrange}
r(\epsilon,h) = c(\epsilon,h)(\epsilon^m + a_{m-1}(h)\epsilon^{m-1} + \ldots +a_0(h)),\quad (\epsilon,h) \in \mbox{neigh}((0,0)).
\end{align}
where $c$ and $a_j$, $0\leq j \leq m-1$, are smooth functions of $(\epsilon,h)$ and $h$, respectively, with $c(0,0)\neq 0$, $a_j(0) = 0$.
As $|c(\epsilon,h)|$ is larger than some fixed, positive constant in a neighborhood of (0,0), let us assume without loss of generality that
\begin{align*}
r(\epsilon,h) = \epsilon^m + a_{m-1}(h)\epsilon^{m-1} + \ldots + a_0(h).
\end{align*}
Then by Theorem A.III.I of \cite{GerardPoles}, the roots of the right hand side, considered as a polyomial in $\epsilon$, have formal asymptotic expansions in Puiseux series, i.e. powers of $h^{1/k}$, for some fixed $k\in \mathbf{N}$. Thus on the level of formal power series we have
\begin{align*}{}
\epsilon^m + a_{m-1}(h)\epsilon^{m-1} + \ldots + a_0(h) = \prod_{i = 1}^m (\epsilon-f^{(i)}(h^\frac{1}{k})),
\end{align*}
where $f^{(i)}(h^\frac{1}{k})$ represents a formal Puiseux series,
\begin{align*}
f^{(i)}(h^\frac{1}{k}) \sim \sum_{n=0}^\infty c^{(i)}_n h^\frac{n}{k}.
\end{align*}
Using a Borelian construction, we may find smooth functions $\tilde f^{(i)}(h)$, $1 \leq i \leq m$, defined when $h \geq 0$, with asymptotic expansion near $h = 0$ given by $f^{(i)}(h)$. Then 
\begin{align}\label{factors}
\epsilon^m + a_{m-1}(h)\epsilon^{m-1} + \ldots + a_0(h) = \prod_{i = 1}^m (\epsilon-\tilde f^{(i)}(h^\frac{1}{k}) + \mathcal{O}(h^\infty)).
\end{align}
We claim that $r(\epsilon,h) = \mathcal{O}(h^\infty) \iff \dist(\epsilon,\cup_{i=1}^m f^{(i)}(h^\frac{1}{k})) = \mathcal{O}(h^\infty)$. 

Indeed, ($\Leftarrow$) is clear from (\ref{factors}). For ($\Rightarrow$), if $\dist(\epsilon,\cup_{i=1}^m f^{(i)}(h^\frac{1}{k})) \neq \mathcal{O}(h^\infty)$, then, possibly after restricting to a sequence of values of $h$ tending to zero, we have that $|\epsilon - \tilde f^{(i)}(h^\frac{1}{k})| \geq \frac{1}{\mathcal{O}(1)} h^{N_i}$, with $N_i \in \mathbf{N}$, $1 \leq i \leq m$, hence
\begin{align*}
|r(\epsilon,h)| = \prod_{i=1}^m |\epsilon-\tilde f^{(i)}(h) + \mathcal{O}(h^\infty)| \geq \prod_{i=1}^m (\frac{1}{\mathcal{O}(1)}h^{N_i} - \mathcal{O}(h^\infty)) \geq \frac{1}{\mathcal{O}(1)} h^{N_1+ \ldots +N_m} \neq \mathcal{O}(h^\infty).
\end{align*}
Conversely, if $\epsilon$ is a smooth (real-valued) function of $h^\frac{1}{k}$, 
\begin{align}\label{single}
\epsilon=  f(h^\frac{1}{k}) \sim \sum_{n=0}^\infty c_n h^\frac{n}{k}, \quad c_n \in \mathbf{R},
\end{align}
then by a Borelian construction, we can find smooth functions $f^{(i)}$, $0 \leq i \leq k-1$ with asymptotics given by the Puiseux conjugates of (\ref{single}), i.e.
\begin{align*}
f^{(i)}(h^\frac{1}{k}) \sim \sum_{n=0}^\infty c_n (\zeta_k^i h^\frac{1}{k})^n,
\end{align*}
where $\zeta_k$ is a primitive $k$th root of unity. (We can take $f^{(0)} = f$.) Then one can check that
\begin{align}\label{singledegen}
r(\epsilon,h) = \prod_{i = 1}^m (\epsilon - f^{(i)}(h^\frac{1}{k})) 
\end{align}
is a smooth function near $(0,0)\in \mathbf{R}^2$, and $r(\epsilon,h) = \mathcal{O}(h^\infty)$ when $\epsilon = f(h)$. 

If $\epsilon \in \{ f_\mu(h^\frac{1}{k_\mu})\st 1\leq \mu \leq M\}$, where each $f_\mu$ is a smooth, real-valued function near 0, then letting $r(\epsilon,h)$ be the product of the corresponding expressions (\ref{singledegen}) for each $f_\mu$, we get a smooth function $r(\epsilon,h)$ such that $r(\epsilon,h) = \mathcal{O}(h^\infty)$ along the union of the curves $\epsilon = f_\mu(h^\frac{1}{k_\mu})$. 

%then
%%
%\begin{align*}\label{}
%r(\epsilon,h) = \prod_{\mu=1}^M \prod_{i=1}^{m_\mu} (\epsilon-f^{(i)}_\mu(h^\frac{1}{k_\mu}))
%\end{align*}
%%  %
%where $f^{(i)}_\mu(h^\frac{1}{k_\mu})$, $1\leq i\leq m_\mu$ are smooth functions representing the Puiseux conjugates of the Puiseux series for $f_\mu(h^\frac{1}{k_\mu})$. 

To sum up the discussion, we see that there is a degenerate relationship between the parameters $\epsilon,h$ precisely when $\epsilon$ is within $\mathcal{O}(h^\infty)$ of a finite number of curves of the form $\epsilon = \tilde f^{(i)}(h^\frac{1}{k})$, where $\tilde f^{(i)}$ is a smooth, real-valued function near 0. 
\end{remark}

\bibliography{mybib}{}
\bibliographystyle{abbrv}

\end{document}